\newtheorem{theorem}{Theorem}[section]
\newtheorem{lemma}[theorem]{Lemma}
\newtheorem{proposition}[theorem]{Proposition}
\newtheorem{definition}[theorem]{Definition}
\newcommand{\nc}{\newcommand}
\newcommand{\cH}{{\mathcal H}}
\newcommand{\cA}{{\mathcal A}}
\newcommand{\cC}{{\mathcal C}}
\newcommand{\cO}{{\mathcal O}}
\newcommand{\cK}{{\mathcal K}}
\newcommand{\cX}{{\mathcal X}}
\newcommand{\cQ}{{\mathcal Q}}
\newcommand{\cU}{{\mathcal U}}
\newcommand{\bH}{{\mathbb H}}
\newcommand{\bG}{{\mathbb G}}
\newcommand{\bC}{{\mathbb C}}
\newcommand{\bZ}{{\mathbb Z}}
\newcommand{\bP}{{\mathbb P}}
\newcommand{\bU}{{\mathbb U}}
\newcommand{\bR}{{\mathbb R}}
\nc{\OLD}{\textcolor{blue}{OLD. (}}
\nc{\NEW}{\textcolor{blue}{NEW. (}}
\nc{\blue}[1]{\textcolor{blue}{ #1}}
\nc{\fr}{{\rightarrow}}
\nc{\co}{{\nabla}}
\nc{\cu}{{\overlineline{\nabla}}}
\nc{\gmc}{\nabla}
\nc{\mtin}[1]{\mbox{{\tiny #1}}}
\nc{\rank}[1]{r_{\mbox{{\tiny #1}}}}
\DeclareMathOperator{\Hom}{Hom}
\DeclareMathOperator{\rk}{rk}
\DeclareMathOperator{\coker}{coker}
\newtheoremstyle{dico}
 {\baselineskip}   
  {\topsep}   
  {}  
  {0pt}       
  {} 
  {.}         
  {5pt plus 1pt minus 1pt} 
  {}          
\theoremstyle{dico}
\newtheorem{say}[theorem]{}
\numberwithin{equation}{section}
\newcommand{\om}{\omega}
\newcommand{\M}{\mathsf{M}}
\newcommand{\Tg}{\mathsf{T}_g}
\newcommand{\Mg}{\mathsf{M}_g}
\newcommand{\A}{\mathsf{A}}
\newcommand{\Ag}{\mathsf{A}_g}
\newcommand{\Cliff}{\operatorname{Cliff}}
\newcommand{\gon}{ \operatorname{gon}}
\newcommand{\sig}{\operatorname{\mathfrak{S}}}
\newcommand{\sigg}{\sig_g}
\newcommand{\gm}{\nabla} 
\newcommand{\nh}{\nabla^{hdg}}
\nc{\lra}{{\longrightarrow}}
\newcommand{\vhs}{\bH_\bZ}
\newcommand{\vhsc}{\bH_\bC}
\newcommand{\bun}{\cH}
\newcommand{\meno}{{-1}}
\newcommand{\codim}{\operatorname{codim}}
\newcommand{\End}{\operatorname{End}}
\newcommand{\id}{\operatorname{id}}
\newcommand{\La}{\Lambda}
\newcommand{\Ann}{\operatorname{Ann}}
\newcommand{\nhom}{\nabla^{\Hom}}
\newcommand{\tj}{\tilde{j}}
\begin{document}

\title{Totally geodesic subvarieties in the moduli space of curves}

\author[A. Ghigi]{Alessandro Ghigi}
\address{Dipartimento di Matematica,
	Universit\`a di Pavia,
	Via Ferrata, 5,
	27100 Pavia, Italy}
\email{alessandro.ghigi@unipv.it  }

\author[P. Pirola]{Gian Pietro Pirola}
\address{Dipartimento di Matematica,
	Universit\`a di Pavia,
	Via Ferrata, 5,
	27100 Pavia, Italy}
\email{gianpietro.pirola@unipv.it }

\author[S. Torelli]{Sara Torelli}
\address{Dipartimento di Matematica,
	Universit\`a di Pavia,
	Via Ferrata, 5,
	27100 Pavia, Italy}
\email{sara.torelli7@gmail.com}

\thanks{The authors were supported by PRIN 2015 Moduli spaces and Lie
  Theory, INdAM - GNSAGA,
FAR 2016 (Pavia) ``Variet\`a algebriche, calcolo algebrico, grafi orientati e topologici'' and MIUR, Programma Dipartimenti di Eccellenza
(2018-2022) - Dipartimento di Matematica ``F. Casorati'', Universit\`a degli Studi di Pavia.}
 

\keywords{Moduli space of curves; totally geodesics submanifolds; Massey products, Fujita decompositions}
 
\subjclass[2010]{14C30, 14H10, 14H15, 32G20} 

\date{\today}

\begin{abstract} In this paper we study totally geodesic subvarieties
  $Y \subset \A_g$ of the moduli space of principally polarized
  abelian varieties with respect to the Siegel metric, for $g\geq 4$.
  We prove that if $Y$ is generically contained in the Torelli locus,
  then $\dim Y \leq (7g -2)/3$. 
\end{abstract}

\maketitle

\tableofcontents

\section{Introduction}

Denote by $\M_g$ the moduli space of smooth complex projective curves
of genus $g$, by $\Ag$ the moduli space of principally polarized
abelian varieties and by $j : \M_g \fr \A_g$ the period map.  The
\emph{Torelli locus} $\Tg$ is the closure of $j(\Mg)$ in $\Ag$.  It is
interesting to relate $\Tg$ to the geometry of $\Ag$ as a locally
symmetric variety. We refer to \cite{moonen-oort,CF10,CFG15,deba} for
more information and motivation.  In particular we are interested in
\emph{totally geodesic subvarieties} $Y$ of $\Ag$, i.e. algebraic
subvarieties that are images of totally geodesic submanifolds of
Siegel space $\sigg$.  \emph{Shimura subvarieties} are an important
subclass of totally geodesic subvarieties, related to Hodge theory and
arithmetics \cite{mumford}.  One expects that there are very few
totally geodesic subvarieties of $\Ag$ that are \emph{generically
  contained} in $\Tg$ i.e. such that $Y \subset \Tg$ and
$Y\cap j(\Mg) \neq \emptyset$.  As for Shimura varieties, following
Coleman and Oort, one expects that for large $g$ there are no such
varieties generically contained in $\Tg$, see \cite{moonen-oort}.

An important step in the study of the extrinsic geometry of $\Tg$
inside $\Ag$ was the computation of the second fundamental form of the
period map (which is an embedding outside the hyperelliptic locus
\cite{oort-steenbrink}). This was accomplished in \cite{cpt} and
refined in \cite{CFG15}. Unfortunately this leads only rarely to
explicit formulae. But it is enough to get an upper bound for the
dimension of a totally geodesic subvariety $Y$ generically contained
in $\Tg$ in terms of the gonality of a point of $Y\cap j (\Mg)$, see
\cite{CFG15}. From this one gets a bound without gonality assumptions:
$\dim Y \leq 5 (g-1)/2$, as soon as $g\geq 4$ and $Y$ is not contained
in the hyperellitptic locus.

In this paper we prove the following.
\begin{theorem}\label{Thm-Main2} 
  Let $Y$ be a totally geodesic subvariety of $\A_g$ that is
  generically contained in the Torelli locus.  If $g\geq 4$, then
  $\dim Y \leq (7g -2)/3$.
\end{theorem}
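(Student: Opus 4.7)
I would combine the vanishing of the second fundamental form of the period map at a general point of $Y$, the Fujita decomposition of the Hodge bundle along a generic curve in $Y$, and Massey/adjoint-image constructions.

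Fix a general point $[C]\in Y\cap j(\M_g)$ and set $V:=T_{[C]}Y\subset H^1(C,T_C)$, $d:=\dim V=\dim Y$. The totally geodesic hypothesis, together with the formula of \cite{cpt,CFG15} for the second fundamental form of $j:\M_g\hookrightarrow\A_g$, forces, for every $\xi\in V$, the kernel $K_\xi:=\ker\bigl(\cup\xi:H^0(K_C)\to H^1(\OO_C)\bigr)$ to satisfy a strong orthogonality condition: the products $\omega_1\omega_2$ with $\omega_i\in K_\xi$ must lie in the annihilator of $V$ inside $H^0(2K_C)$. In particular $\dim K_\xi$ is bounded below in terms of $d$ and $g$.

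The new ingredient, and the source of the improvement over the bound $5(g-1)/2$ of \cite{CFG15}, is the Fujita decomposition. For any smooth projective curve $B\subset Y$ through $[C]$, pulling back the universal family gives $f:\cX\to B$ with $f_*\omega_{\cX/B}=\cA\oplus\cU$, where $\cU$ is unitary and flat for the Gauss--Manin connection. Coupling this with a Massey-product/adjoint construction, one shows that for each triple $(\xi,\omega_1,\omega_2)$ with $\xi\in V$ and $\omega_1,\omega_2\in K_\xi$ the adjoint class $\mathrm{adj}(\xi,\omega_1,\omega_2)\in H^0(K_C)/\langle\omega_1,\omega_2\rangle$ lifts to an element of the fibre $\cU|_{[C]}\subset H^0(K_C)$. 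Varying the inputs, this produces a lower bound on $\dim\cU|_{[C]}$ that grows linearly in $d$.

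To conclude, I would use that sections of $\cU|_{[C]}$, being flat for a unitary connection, cannot all share a common divisor of small degree; together with the Brill--Noether bound on the gonality of a genus $g$ curve, this yields an upper bound on $\dim\cU|_{[C]}$ depending only on $g$. Balancing the lower and upper bounds gives $d\le(7g-2)/3$ for $g\ge 4$. The hardest step is proving rigorously that the adjoint classes really land in the flat part $\cU|_{[C]}$ and not merely in $H^0(K_C)$: this requires a careful curvature-type analysis of the Gauss--Manin connection restricted to $Y$, and is where the totally geodesic hypothesis enters in a way that was not exploited in \cite{CFG15}.
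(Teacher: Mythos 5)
Your outline correctly identifies the three ingredients the paper actually uses (the second fundamental form, the flat unitary summand $\cU$ of the Fujita decomposition, and Massey/adjoint products), but the way you propose to glue them together has a genuine gap at its centre, and you acknowledge it yourself: the claim that the adjoint classes $\mathrm{adj}(\xi,\omega_1,\omega_2)$ lift to $\cU|_{[C]}$ is never established, and it is not what the totally geodesic hypothesis gives you. The paper's actual mechanism is different and more concrete: one takes the specific $\xi\in T_yY$ produced by a secant-variety argument on the bicanonical curve (Proposition \ref{secanti}, which gives $\dim\ker\cup\xi\geq g-k_0-1$ with $k_0=\lceil (c-1)/2\rceil$ depending only on the codimension $c$, not a bound "for every $\xi$ in terms of $d$ and $g$"), complexifies the geodesic through $y$ in direction $\xi$ to a holomorphic curve $B$ inside $Y$ (Lemma \ref{analitica}), and then proves (Lemma \ref{geodetica}, via parallel transport and the geodesic equation $\nabla^{\Hom}_{\dot\gamma}\dot\gamma=0$) that along $B$ the \emph{entire} kernel bundle $\cK$ coincides with $\cU$. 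So it is not that certain adjoint classes land in the flat part; it is that $\ker\cup\xi$ itself is flat along the complexified geodesic. Without this lemma, or a proof of your lifting claim, the lower bound on $\rk\cU$ that drives the whole argument is missing.

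Your proposed endgame is also not viable as stated. There is no argument given for the assertion that flat unitary sections "cannot all share a common divisor of small degree," nor for how this plus Brill--Noether would yield an upper bound on $\rk\cU$ depending only on $g$ that balances against a lower bound linear in $d$ to give exactly $(7g-2)/3$. The paper instead runs a dichotomy on the Massey products of $\cU=\cK$ over $B$: if they all vanish, Proposition \ref{prop-liftMT} and the tubular Castelnuovo--de Franchis theorem (Theorem \ref{thm:tubularCdF}) produce a degree $\geq 2$ map $\cC\fr C'$ with $g(C')=\rk\cU\geq g-k_0-1$, and Riemann--Hurwitz forces $2k_0\geq g-3$, contradicting $2k_0\leq g-4$; if some Massey product is nonzero, a decomposable element in the kernel of the adjoint map yields a divisor $D$ contributing to the Clifford index with $\Cliff(D)\leq k_0+1$, hence $\gon(C_b)\leq k_0+4$, and then the gonality-dependent bound $\dim Y\leq 2g+\gon(C_b)-4$ of \cite{CFG15} closes the contradiction. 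Both the dichotomy and the two distinct concluding arguments are absent from your plan, so as written the proposal does not constitute a proof.
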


Our proof contains two new ideas. First of all, we use directly the
geodesic curves in $\sigg$. We are able to relate them to Hodge theory
(see Lemma \ref{geodetica}).  The Hodge bundle of the (real
one-dimensional) family of abelian varieties represented by the
geodesic has nice properties with respect to the Fujita decomposition
\cite{Fuj78b,CD:Answer_2017}).  Such properties do not hold for
general families \cite{GT18}.

Secondly, our proof depends heavily on some recent results obtained in
\cite{PT,GST17}.  In fact starting from the geodesics, we build a
complex one-dimensional family of curves (contained in the $Y$) and we
apply the above-mentioned results to this family.  This allows to
split the proof in two cases: in the first case one of the main
theorems in \cite{PT,GST17} yields a map of the whole family onto a
fixed curve. From this one easily gets a better bound on $\dim Y$. In
the second case one is able to control the Clifford index and the
gonality of a point of $Y$. Then an application of the bound in
\cite{CFG15} yields the result.

The plan of the paper is the following: in \S \ref{sec:prel} we recall
some definitions and some results from \cite{PT,GST17}.  In \S
\ref{sec:geo} we study the relation of geodesics to Hodge theory.  In
\S \ref{sec:draft} we prove Theorem \ref{Thm-Main2}.

We refer to \cite{hain,dejong-zhang,gm1,liu-yau-ecc,lu-zuo-HE,toledo}
for related results obtained by different methods.

{\bfseries \noindent{Acknowledgements}}.  
The authors  would like to thank Paola Frediani for interesting discussions.

\section{Preliminaries on weight $1$ variations of the Hodge
  structure.}\label{sec:prel}

The sheaf of $k$-forms (resp.  $(p,q)$-forms) on a complex manifold
$M$ will be denoted by $\cA^k_M$ (resp. $\cA^{p,q}_M$).  The sheaf
holomorphic $k$-forms is denoted by $\Omega^k_M$.

\begin{say}
  \label{secff}
  Given an exact sequence
  $0 \lra E \lra F \stackrel{\pi}{\lra} G \lra 0$ of holomorphic
  vector bundles over a complex manifold $M$ and a (complex)
  connection $\nabla$ on $F$, the second fundamental form
  $\sigma \in \cA^{1,0} _M(E^*\otimes G)$ is defined in the following
  way: given $u\in E_x$, extend $u$ to a local section $\tilde{u}$ of
  $E$ and set $\sigma (u) := \pi( (\nabla \tilde{u}) (x))$.  When
  $\sigma \equiv 0$, we have $\nabla s \in \cA^1_M (F)$ for any
  section $s\in \Gamma (F)$. This means that the connection $\nabla$
  restricts to the bundle $F$ and defines a connection there.

  If $\nabla$ is flat, $\sigma$ is in fact holomorphic, i.e.
  $\sigma \in H^0(B, \Omega^1_B \otimes E^* \otimes G)$.
\end{say}

\begin{say}
  Let $(\vhs, \cH^{1,0}, \cQ)$ be a polarized variation of the Hodge
  structure (shortly, PVHS) of weight $1$ over a complex manifold
  $B$. Here $\vhs$ denotes the local system of lattices, $\cH^{1,0}$
  the Hodge bundle (that in weight $1$ determines the Hodge
  filtration) and $\cQ$ the polarization. We also let
  $\vhsc=\vhs\otimes_\bZ\bC$ denote the local system of complex vector
  spaces and $\bun = \vhsc \otimes _\bC \cO_B$ the associated
  holomorphic flat bundle with flat connection $\gm$. This flat
  holomorphic connection is in fact defined by setting the kernel
  equal to $\bH_\bC$.  The Hodge metric is defined by
  $h(v, w) := i \cQ(v, \bar{w})$.  It is positive definite on
  $\cH^{1,0}$. Hence the orthogonal projection $p: \cH\to \cH^{1,0}$
  is well-defined and
  $\nh=p\nabla_{|\cH^{1,0}}: \cH\to \cH^{1,0}\otimes \cA^1_B$ is the
  Chern connection of the Hermitian bundle $(\cH^{1,0}, h)$.
\end{say}

\begin{say}
  We recall the definition of Siegel upper half-space.  Let
  $\om = \sum_i dx_i \wedge dy_i$ be the standard symplectic form on
  $V:=\bR^{2g}$.  If $J\in \End V$, $J^2 = -\id_V$ and $J^*\om = \om$,
  then $g_J(x,y):= \om (x, J y)$ is a nondegenerate symmetric bilinear
  form on $\bR^{2g}$.  The Siegel upper half-space is defined as
  $\sigg:= \{J\in \End V: J^2= -\id_V, J^*\om=\om, g_J$ is positive
  definite$\} $.  It is a symmetric space of the non-compact type.

\label{tautovhs}
Set $V_\bC:=V\otimes \bC$.  For any $J\in \sigg$ the space $V$ can be
endowed with the complex structure $J$.  We denote by
$V_J^{1,0}\subset V_\bC$ the space of its vectors of type $(1,0)$. We
also set $\cH^{1,0} _J := \Ann (V_J^{0,1}) \subset V_\bC^*$.  Let
$ \La \subset V_\bC^*$ denote the set of forms that are integer-valued
on $\bZ^{2g} \subset V$. $\La$ is a lattice in $V^*$.  The symplectic
form $\om$ induces an isomorphism $\phi: V\cong V^*$ in the usual way.
We denote by $\cQ$ the symplectic form on $V^*$ obtained by
transporting $\om$ to $V^*$ via $\phi$.  With these data we get a
polarized variation of the Hodge structure on $\sigg$: the local
system is $\vhs: = \sigg \times \La$, Hodge bundle is $\cH^{1,0}$ and
the polarization is $\cQ $.
\end{say}

\begin{say}
  If $J \in \sigg$, then $T_J \sigg = \{ X \in \End V: XJ + J X = 0$,
  $\om (X x , y ) + \om ( x , Xy ) =0, \forall x,y\in V\}$.  The
  Siegel upper half-space has an integrable complex structure that on
  $T_J\sigg$ acts by the rule $X \mapsto JX$.  If $X\in T_J\sigg$, we
  can complexify $X$ and its complexification (still denoted by $X$)
  maps $V^{1,0}_J$ to $V_J^{0,1}$ and vice versa.  The transpose of
  $X$, denoted by $X^*$ maps therefore $\cH^{1,0}_J$ to
  $ \cH^{0,1}_J :=\overline{\cH^{1,0}_J}$.  The map
  \begin{gather*}
    X \longmapsto X^*|_{\cH^{1,0}_J } \in \Hom ( \cH^{1,0}, \cH^{0,1})
  \end{gather*}
  yields an isomorphism
  \begin{gather}
    \label{wj}
    T_J\sigg\simeq W_J:=\{L\in \Hom ( \cH^{1,0}, \cH^{0,1}):\cQ
    (L\alpha, \bar{\beta}) + \cQ (\alpha, \overline{L\beta})=0\}.
  \end{gather}
  Therefore we can identify the tangent bundle $T\sigg$ with the
  subbundle $W \subset \Hom ( \cH^{1,0}, \cH^{0,1})$ defined by
  \eqref{wj}.

  The Hodge bundle $\cH^{1,0}$ is provided with the connection $\nh$,
  as happens for every VHS.  Since $\cH^{0,1} \cong (\cH^{1,0})^*$,
  the connection $\nh$ induces a connection on $\cH^{0,1}$ that we
  denote by $\nabla^*$.  So we get an induced connection $\nhom$ on
  the bundle $\Hom ( \cH^{1,0}, \cH^{0,1})$.  The Levi-Civita
  connection of the symmetric metric coincides with the restriction of
  $\nhom$ to $W \cong T\sigg$.
\end{say}

\begin{say}
  Assume now that $B$ is a Riemann surface and that
  $(\vhs, \cH^{1,0}, \cQ) $ is a PVHS of weight 1 on $B$.  Consider
  the exact sequence
  \begin{equation}
    \label{ses-vhs}
    \begin{tikzcd}
      0 \arrow{r} & \cH^{1,0} \arrow{r} & \cH \arrow{r}{\pi^{{0,1}}} &
      \cH/\cH^{1,0} \arrow{r}& 0.
    \end{tikzcd}
  \end{equation}
  Consider on $\cH$ the flat connection and let
  $\sigma: \cH^{1,0}\to \cH/\cH^{1,0}\otimes \Omega_B^1$ be the
  corresponding second fundamental form, as defined in \ref{secff}.
  Using $\gm$ and $\sigma$ we define two vector subbundles of
  $\cH^{1,0}$.
\end{say}

\begin{definition}
  \label{defUK}
  Let $\bU$ denote the subsheaf of $\cH^{1,0}$ spanned by $\nh$-flat
  sections (equivalently by $\gm$-flat sections).  Set \begin{itemize}
  \item[$(i)$] $\cU:=\bU\otimes
    \cO_B,$ 
  \item[$(ii)$]
    $\cK:=\ker ( \sigma : \cH^{1,0} \lra \cH /\cH^{1,0} \otimes
    \Omega_B^1) $.
  \end{itemize}
  We call $\cU$ {\em the unitary flat bundle} and $\cK$ {\em the
    kernel bundle} of the variation, respectively.
\end{definition}
By definition $\cU$ is a holomorphic flat bundle. Since $\nh_{|\cU}$
is the metric connection, it is unitary.  Since $\nabla$ is flat,
$\sigma$ is holomorphic, so $\cK$ is a coherent subsheaf.  If $\sigma$
is a morphism of constant rank, then $\cK$ is also a vector subbundle
of $\cH^{1,0}$.

\begin{proposition}\label{prop-incUK} We
  have $ \cU\subset \cK$.
\end{proposition}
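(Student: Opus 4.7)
The plan is to exploit the second (equivalent) description of $\cU$ given in Definition \ref{defUK}, namely that the local system $\bU$ is spanned by $\gm$-flat sections of $\cH^{1,0}\subset \cH$. Once this characterization is granted, the inclusion $\cU\subset \cK$ falls out immediately from the construction of the second fundamental form recalled in \ref{secff}.

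Concretely, I would let $u$ be a local section of $\cH^{1,0}$ with $\gm u=0$. To evaluate $\sigma(u)(x)$ at a point $x\in B$, the prescription in \ref{secff} is to extend $u$ to a local section of $\cH^{1,0}$, view it as a section of $\cH$ via the inclusion in \eqref{ses-vhs}, apply $\gm$, and project onto $\cH/\cH^{1,0}$ via $\pi^{0,1}$. Choosing $u$ itself as the extension yields
\[
\sigma(u)(x)=\pi^{0,1}\bigl((\gm u)(x)\bigr)=0,
\]
so that $u$ is a local section of $\cK$. Since $\sigma$ is $\cO_B$-linear, $\cK$ is an $\cO_B$-submodule of $\cH^{1,0}$; and since $\cU=\bU\otimes_{\bC}\cO_B$ is by definition locally generated over $\cO_B$ by the $\gm$-flat sections just considered, this gives $\cU\subset \cK$.

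There is essentially no obstacle here, because we only use the easy implication ``$\gm$-flat $\Rightarrow$ annihilated by $\sigma$''. The opposite characterization ``$\nh$-flat'' would be harder to use directly, since $\nh=p\gm_{|\cH^{1,0}}$ only controls the $(1,0)$-part of $\gm u$ while $\sigma(u)$ is its image in $\cH/\cH^{1,0}\cong \cH^{0,1}$; deducing $\sigma(u)=0$ from $\nh u=0$ would require the very equivalence built into Definition \ref{defUK}. Using the $\gm$-flat description from the start bypasses this and keeps the argument to a single line.
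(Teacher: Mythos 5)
Your argument is correct and is essentially the proof the paper gives: evaluate the second fundamental form on a $\gm$-flat local section of $\cH^{1,0}$ (which may serve as its own extension), observe that $\sigma(u)=\pi^{0,1}((\gm u)(x))=0$, and conclude by $\cO_B$-linearity of $\sigma$. The only cosmetic difference is that the paper phrases the flatness as $\nh s=\gm s\equiv 0$, invoking the equivalence of the two descriptions of $\bU$ that you deliberately sidestep.
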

\begin{proof}
  If $u\in \cU_x$, there is a section $s\in \Gamma (A, \bU)$ defined
  on a neighbourhood $A$ of $x$ such that $s(x)=u$. Since
  $\nh s = \gm s \equiv 0$,
  $\sigma (u) = \pi^{0,1} ( (\gm s) (x)) =0$.
\end{proof}

\begin{say}
  The VHS we are interested in come from families of curves.  Let
  $f:\cC\to B$ be a smooth family of genus $g$ curves. This is a
  proper and submersive morphism $f$ from a smooth complex surface
  $\cC$ to a smooth complex curve $B$ whose fibres are curves of genus
  $g$. The map $f$ defines a PVHS $(\vhs, \cH^{1,0},\cQ)$ (called
  geometric) by taking the local system $\vhs:=R^1f_\ast\bZ$ and the
  Hodge bundle
  $\cH^{1,0}:=f_\ast\Omega^1_{\cC/B}=f_\ast\omega_{\cC/B}$.  The
  polarization $\cQ$ is given by the intersection form.  As usual we
  also have the flat bundle $\cH=R^1f_\ast \bC\otimes \cO_B$ with the
  Gauss Manin connection $\gm$, whose flat sections define the local
  system $R^1f_\ast \bC$.
\end{say}

We consider the bundles $\cU$ and $\cK$ of this VHS. In this case
both $\cU$ and $\cK$ can be described in terms of the submersion
$f:\cC\to B$ using holomorphic $1$-forms on $\cC$.  We outline shortly
this description, referring to \cite{PT} and \cite{GST17} for details.
Let
\begin{gather*}
  \Omega^1_{\cC,d}=\ker \{d:\Omega^1_{\cC}\to
  \Omega^2_{\cC}\}\subseteq \Omega^1_{\cC}
\end{gather*}
be the subsheaf of closed holomorphic $1$-forms on $\cC$.

To describe $\cK$ consider the exact sequence
\begin{equation}\label{SeS-DeRhamOnFib}
  \xymatrix@!R{
    {0}  & {f^*\omega_B}  & {\Omega^1_\cC}  & {\Omega^1_{\cC/B}\simeq \omega_{\cC/B}}  & {0}                                  & 
    \ar"1,1";"1,2"\ar"1,2";"1,3"\ar"1,3";"1,4"\ar"1,4";"1,5"
    \hole
  }
\end{equation}
defined by duality using the morphism $d f:T_\cC\to f^*T_B$. Here the
cokernel $\Omega^1_{\cC/B}$ is the sheaf of relative differentials and
$\Omega^1_{\cC/B}\simeq \omega_{\cC/B}$ since $f$ is smooth.  Pushing
forward \eqref{SeS-DeRhamOnFib} to $B$, we get the exact sequence
\begin{equation*}
  \xymatrix@!R{
    {0}  & {f_*f^*\omega_B\simeq\omega_B}  & {f_*\Omega^1_{\cC}}  & {f_*\om_{\cC/B}}  & {(R^1f_*\cO_{\cC})\otimes\omega_B} . 
    \ar"1,1";"1,2"\ar"1,2";"1,3"\ar"1,3";"1,4"\ar"1,4";"1,5"^-{\partial}
    \hole
  }
\end{equation*}
By a fundamental result of Griffiths (see
\cite{Grif_OnthePeriodsI&II_1969} and \cite[Ch. 10]
{V_HodgeTheoryI_2002}) $\partial$ is the vector bundle morphism that
acts on the fibre over $b\in B$ as follows:
\begin{gather}
  \label{griffini}
  \partial_b : H^0(C_b , \om_{C_b}) \lra H^1(C_b, \cO_{C_b})\otimes
  T_b^*B , \quad \partial_b (\om): = \xi_b \cup \om,
\end{gather}
where $\xi_b : T_bB \fr H^1(C_b,T_{C_b})$ is the Kodaira-Spencer map.
Using the isomorphism
$\cH_b/\cH^{1,0}_b \cong H^{0,1}(C_b) \cong H^1(C_b, \cO_{C_b})$ one
can identify $\partial_b$ with the second fundamental form $\sigma_b$
of \eqref{ses-vhs}.  Consequently, we have that
\begin{gather}
  \label{Kpartial}
  \cK= \ker \partial.
\end{gather}
Since $\dim B=1$, if $v\in T_bB$ is a non-zero vector, then we have
\begin{gather*}
  \cK_b = \ker \sigma_b(v) = \ker \xi_b (v).
\end{gather*}
In particular $\sigma$ has constant rank and $\cK$ is a vector
subbundle of $f_*\om_{\cC/B}$.  Moreover the sequence
\begin{equation*}
  \begin{tikzcd}
    0 \arrow{r} & \omega_B \arrow{r} & f_*\Omega^1_{\cC} \arrow
    {r}{\pi} & \cK \arrow{r} & 0.
  \end{tikzcd}
\end{equation*}
is exact.

We recall shortly the definition of Massey products.
Fix $b\in B$ and a generator $v\in T_b B$.  Using $v$ we get an
isomorphism $T_bB \cong \bC$. So the exact sequence \eqref
{SeS-DeRhamOnFib} restricted to the fibre  $C_b$ reads
\begin{equation}
  \label{SeS-DeRhamOnSFibre}
  \begin{tikzcd}
    0 \arrow{r} & \cO_{C_b} \arrow{r} & \Omega^1_{\cC}|_{C_b}
    \arrow{r}& \om _{C_b} \arrow{r} & 0.
  \end{tikzcd}
\end{equation}
We set for simplicity
\begin{gather}
  E_b:= \Omega^1_{\cC}|_{C_b}.
\end{gather}
The extension class of \eqref {SeS-DeRhamOnSFibre} is
$ \xi_b(v) \in H^1(C_b, T_{C_b})$. It follows that
$\det E_b \cong \omega_{C_b}$. So we get the \emph{adjoint map}, first
defined in \cite{C-P_TheGriffiths_1995}:
\begin{equation}\label{Mor-Mp/Aj}
  \Phi_b\colon \xymatrix@!R{
    {\bigwedge^2H^0(E_b )}  & {H^0(\bigwedge^2E_b)\simeq H^0(\omega_{C_b})}.                  
    \ar"1,1";"1,2"
    \hole
  }
\end{equation}
The long cohomology exact sequence associated to
\eqref{SeS-DeRhamOnSFibre} starts as follows:
\begin{equation*}
  \begin{tikzcd}
    0 \arrow {r} & \bC \arrow {r} & H^0(C_b , E_b) \arrow {r}{p} &
    H^0(C_b ,\om_{C_b}) \arrow {r}{\cup\xi_b(v)} & H^1(C_b ,\cO_{C_b})
  \end{tikzcd}
\end{equation*}
Given $v_1,v_2$ in $\cK_b = \ker \cup \xi_b$, we lift them, i.e. we
take vectors $\tilde{v_1}, \tilde{v_2} \in H^0(C_b, E_b)$ such that
$p (\tilde{v}_i )= v_i$.  Let
$ \langle v_1,v_2\rangle \subset H^0(C_b, \om _{C_b})$ denote the span
of $v_1, v_2$.
\begin{definition}\label{def:MP}
  The element
  \begin{gather*}
    m_b(v_1,v_2)= [\Phi_b(\tilde{v_1}, \tilde{v_2})]\in
    H^0(\omega_{C_b})/\langle v_1,v_2 \rangle
  \end{gather*}
  is independent of the choice of the liftings and it is called \emph
  {Massey product} of $v_1,v_2\in \ker \cup \xi_b$.
\end{definition}
We notice that $m_b(v_1,v_2)= [\Phi_b(\tilde{v_1}, \tilde{v_2})] =0$
if and only if
$\Phi_b(\tilde{v_1}, \tilde{v_2})\in \langle v_1,v_2 \rangle$.  Since
we are assuming that $f$ is submersive, we have
$\cK_b=\ker\cup_{\xi_b}$ by \eqref{Kpartial}, so the Massey product is
defined for any $v_1, v_2 \in \cK_b$ and for any $b\in B$.

We now consider the bundle $\cU$ for the VHS coming from the family
$f:\cC \fr B$.  On $\cC$ there is an exact sequence of sheaves
\begin{equation*}
  \begin{tikzcd}
    {0} \arrow{r} & {\bC_{\cC}} \arrow{r} & {\cO_{\cC}} \arrow{r} {d}
    & {\Omega^1_{\cC,d}} \arrow{r} & {0.}
  \end{tikzcd}
\end{equation*}
We push it forward to $B$.  Since $f$ is a submersion with compact
connected fibres, $f_\ast\bC\simeq \bC_{B}$,
$f_\ast \cO_{\cC}\simeq \cO_B$ and
$\omega_B= \coker(d:\bC\to \cO_b)=\coker (f_\ast d:f_\ast\bC\to f_\ast
\cO_{\cC} )$. So we get the exact sequence
\begin{equation*}
  \xymatrix@!R{
    {0}  & {\omega_B}  & {f_*\Omega^1_{\cC,d}}  & {R^1f_\ast \bC}  & {R^1f_\ast \cO_{\cC}.}& 
    \ar"1,1";"1,2"\ar"1,2";"1,3"\ar"1,3";"1,4"\ar"1,4";"1,5"
    \hole
  }
\end{equation*}
By the Splitting Lemma \cite[Lemma 3.2]{PT} the local system $\bU$
underlying $\cU$ fits into the above exact sequence as
\begin{equation*}
  \xymatrix@!R{
    {0}  & {\omega_B}  & {f_*\Omega^1_{\cC,d}}  & {\bU}  & {0.} 
    \ar"1,1";"1,2"\ar"1,2";"1,3"\ar"1,3";"1,4"\ar"1,4";"1,5"
    \hole
  }
\end{equation*}

%
%
%
%

Since $\cU\subset \cK$, we can restrict our attention to Massey
products on $\cU$.
\begin{proposition}\label{prop-liftMT} Assume that the Massey products
  of $\cU$ are all zero, i.e. $m(v_1, m_2) =0$ for any
  $v_1 , v_2 \in \cU_b$ and for any $b\in B$.  Then, for any local
  flat frame $s_1, \dots, s_k$ of $\cU$, there are
  $\omega_1, \dots , \omega_m \in H^0(B,f_\ast\Omega^1_{\cC,d})$ such
  that $\pi (\om_j ) = s_j$ and $\omega_i\wedge \omega_j=0$ for any
  $i$ and $j$.
\end{proposition}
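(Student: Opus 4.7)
The plan is as follows. On a sufficiently small open set $A\subset B$ over which $\cU$ is trivialised by the frame and $H^1(A,\omega_B)=0$, the exact sequence
\[
0\lra \omega_B\lra f_*\Omega^1_{\cC,d}\lra \bU\lra 0
\]
furnishes arbitrary lifts $\tilde\omega_j\in H^0(A,f_*\Omega^1_{\cC,d})$ of the frame sections $s_j$. The lifting ambiguity is exactly addition of $f^*\xi_j$ with $\xi_j\in H^0(A,\omega_B)$, and the strategy is to exploit this freedom to correct the naive $\tilde\omega_j$ into $\omega_j$ with all pairwise wedges equal to zero.

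The key is a fibrewise analysis of the defects. Restriction of a $2$-form on the surface $\cC$ to a fibre $C_b$ lies in $H^0(C_b,\det E_b)\cong H^0(\omega_{C_b})\otimes T^*_bB$, using \eqref{SeS-DeRhamOnSFibre}, and naturality of the wedge product identifies $(\tilde\omega_i\wedge\tilde\omega_j)|_{C_b}$ with $\Phi_b(\tilde\omega_i|_{C_b},\tilde\omega_j|_{C_b})$. Applying the Massey vanishing hypothesis to the pair $(s_i(b),s_j(b))$ forces this element into $\langle s_i(b),s_j(b)\rangle\otimes T^*_bB$, so one obtains $a_{ij}(b),b_{ij}(b)\in T^*_bB$ with $(\tilde\omega_i\wedge\tilde\omega_j)|_{C_b}=s_i(b)\otimes a_{ij}(b)+s_j(b)\otimes b_{ij}(b)$. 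The central step is the \emph{structural rigidity}
\[
(\tilde\omega_i\wedge\tilde\omega_j)|_{C_b}=s_i(b)\otimes\xi_j(b)-s_j(b)\otimes\xi_i(b)\qquad(i,j=1,\ldots,k),
\]
for some $\xi_i(b)\in T^*_bB$ depending only on $i$. This is extracted by combining the Massey vanishing on \emph{all} vectors of $\cU_b$ with the bilinearity and antisymmetry of $\Phi_b$: testing vanishing on mixed combinations such as $s_i+s_k$ paired with $s_j$ forces $a_{ij}$ to be independent of $i$ (for $i\ne j$), and antisymmetry then yields $b_{ij}=-a_{ji}=-\xi_i$. Holomorphy of $b\mapsto\xi_i(b)$, hence $\xi_i\in H^0(A,\omega_B)$, is automatic: the $\tilde\omega_i\wedge\tilde\omega_j$ and the $s_\ell$ vary holomorphically, and the $s_\ell(b)$ remain linearly independent fibrewise because $\cU\hookrightarrow\cH^{1,0}$ is a subbundle.

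Once the $\xi_i$ are available, set $\omega_i:=\tilde\omega_i-f^*\xi_i$. Since $f^*\xi_i$ lies in the kernel of $\pi$, the relation $\pi(\omega_i)=s_i$ is preserved. Expanding
\[
\omega_i\wedge\omega_j=\tilde\omega_i\wedge\tilde\omega_j-\tilde\omega_i\wedge f^*\xi_j-f^*\xi_i\wedge\tilde\omega_j+f^*\xi_i\wedge f^*\xi_j,
\]
the last term vanishes because $\dim B=1$. A short local-coordinate computation (writing $\tilde\omega_i=g_i\,dz+h_i\,dt$ and $\xi_i=\phi_i(t)\,dt$) reveals that the structural identity reads precisely $g_ih_j-g_jh_i=g_i\phi_j-g_j\phi_i$, which makes the remaining three terms cancel. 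Hence $\omega_i\wedge\omega_j=0$ on $f^{-1}(A)$, completing the construction.

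The obstacle I expect to be most delicate is the structural rigidity step: converting the pointwise Massey vanishing --- which only guarantees, for each pair, coefficients $a_{ij},b_{ij}$ that could \emph{a priori} depend on the whole pair $(i,j)$ --- into the coherent form with coefficients $\xi_j$, $-\xi_i$ indexed by a single variable. Without this reduction, the corrections to distinct $\tilde\omega_j$ cannot be chosen simultaneously, and it is exactly the global hypothesis of vanishing Massey products on all of $\cU_b$ that makes this combinatorial rigidity available.
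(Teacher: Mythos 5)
Your argument is correct. Note that the paper does not actually prove this proposition; it only cites \cite[Prop.~4.3]{PT}, and your write-up is in substance the argument one finds there: lift the flat frame locally through the extension $0\to\omega_B\to f_*\Omega^1_{\cC,d}\to\bU\to 0$, identify the fibrewise restriction of $\tilde\omega_i\wedge\tilde\omega_j$ with the adjoint map $\Phi_b$ applied to lifts of $s_i(b),s_j(b)$, use the vanishing of all Massey products to confine these wedges to $\langle s_i(b),s_j(b)\rangle\otimes T_b^*B$, and then correct the lifts by forms pulled back from the base. The one step you rightly single out as delicate --- the reduction from pair-dependent coefficients $a_{ij},b_{ij}$ to single-index coefficients $\xi_i$ --- does work as you describe: since $s_1,\dots,s_k$ are fibrewise linearly independent (because $\cU$ is a subbundle of $\cH^{1,0}$), applying the hypothesis to $(s_i+s_k,s_j)$ and expanding by bilinearity forces $a_{ij}=a_{kj}$, and antisymmetry of $\Phi_b$ then pins down $b_{ij}=-\xi_i$. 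Two small remarks: this polarization argument needs three independent sections, so the cases $k\le 2$ should be dispatched separately (for $k=2$ one simply \emph{defines} $\xi_1,\xi_2$ from the two coefficients of the single wedge, and $k=1$ is vacuous); and the conclusion is obtained over the open set $A$ on which the frame lives, which is consistent with how the proposition is invoked in the proof of the main theorem (``up to shrinking $B$''). Your concluding local-coordinate cancellation $g_ih_j-g_jh_i=g_i\phi_j-g_j\phi_i$ is exactly the identity that makes the corrected forms wedge to zero.
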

See \cite[Prop. 4.3]{PT} for the proof.  Massey products on $\cU$
contain deep geometric information as is shown by the following \emph
{tubular} version of the classical Castelnuovo de-Franchis theorem.

\begin{theorem}[\cite{GST17}] \label{thm:tubularCdF} Let
  $f: S \to \Delta$ be a submersive family of smooth projective curves
  over a disk $\Delta$. Let
  $\omega_1,\ldots,\omega_k \in H^0\left(S,\Omega_S^1\right)$
  ($k \geq 2$) be closed holomorphic 1-forms such that
  $\omega_i \wedge \omega_j = 0$ for every $i,j$, and whose
  restrictions to a general fibre $F$ are linearly independent. Then
  (possibly after shrinking $\Delta$) there exist a projective curve
  $C$ and a morphism $\phi: S \to C$ such that
  $\omega_i \in \phi^*H^0\left(C,\omega_C\right)$ for every $i$.
\end{theorem}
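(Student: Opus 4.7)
The plan is to extend the classical Castelnuovo--de Franchis theorem to the tubular setting by realizing $C$ as the leaf space of a holomorphic foliation on $S$. At every point $p$ at which some $\omega_i$ is nonzero, the wedge relations $\omega_i\wedge\omega_j=0$ force the covectors $\omega_1(p),\dots,\omega_k(p)$ to span a one-dimensional subspace of $T^*_pS$; since $\dim S=2$, the common kernel $\mathcal F_p\subset T_pS$ is then a complex line. I would begin by writing $\omega_i=a_i\,dx+b_i\,dt$ in local coordinates $(x,t)$ on $S$ with $f(x,t)=t$, and observing that the relation $a_ib_j=a_jb_i$ yields a common local factorization $\omega_i=a_i\eta$ with $\eta:=dx+g\,dt$. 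The closedness of $\omega_1$ forces $\eta\wedge d\eta=0$, so by Frobenius the distribution $\ker\eta$ is integrable and produces a rank-one holomorphic foliation $\mathcal F$ on the dense open set $U\subset S$ where the $\omega_i$ do not all vanish.

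Next I would pass from $\mathcal F$ to a map into projective space. Since the $\omega_i(p)$ are pointwise proportional, the assignment
\[
\psi\colon U\to\mathbb P^{k-1},\qquad p\longmapsto[\omega_1(p):\cdots:\omega_k(p)],
\]
is a well-defined holomorphic map. A short computation using closedness shows that the affine-chart ratios $r_i:=a_i/a_1$ satisfy $\partial_t r_i=g\,\partial_x r_i$, equivalently $dr_i=(\partial_x r_i)\eta$; hence $d\psi$ has rank at most one and $\mathcal F$ lies in its kernel, so $\psi(U)\subset\mathbb P^{k-1}$ is a one-dimensional analytic subset.

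The fibre-wise compactness then enters crucially. On a general fibre $F_t$ the restrictions $\omega_i|_{F_t}$ are linearly independent by hypothesis, so $\psi|_{F_t}$ is non-constant; being a morphism from a compact Riemann surface, its image $\psi(F_t)$ is a projective curve $C'_t\subset\mathbb P^{k-1}$. All these $C'_t$ lie inside the one-dimensional set $\psi(U)$, so after shrinking $\Delta$ they stay in a single irreducible component of $\psi(U)$ and, by local constancy of the degree of $\psi|_{F_t}$, coincide with a fixed projective curve $C'$. Taking the normalization $C\to C'$ and performing the Stein factorization of $\psi|_{F_t}$ uniformly in $t$ then produces a smooth projective curve $C$ and a morphism $\phi\colon S\to C$ whose connected fibres are precisely the leaves of $\mathcal F$.

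Finally I would check that each $\omega_i$ is a pullback from $C$. By construction $\omega_i$ vanishes on vectors tangent to the leaves of $\mathcal F$, i.e.\ to the fibres of $\phi$, and is closed, so for every vertical vector field $V$ Cartan's formula gives $L_V\omega_i=d\iota_V\omega_i+\iota_V d\omega_i=0$; thus $\omega_i$ is basic and descends to a holomorphic form $\eta_i\in H^0(C,\omega_C)$ with $\omega_i=\phi^*\eta_i$. The main obstacle in this outline is the globalization step in the third paragraph: upgrading the germ of the leaf space of $\mathcal F$ to an honest \emph{projective} curve $C$. This is precisely where the tubular hypothesis enters in an essential way, since compactness of each $F_t$ is what forces $\psi(U)$ to be a projective curve even though $S$ itself is open.
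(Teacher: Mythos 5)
The first thing to say is that the paper contains no proof of this statement: it is imported verbatim from \cite[Theorem 1.5]{GST17} and used as a black box, so there is no in-paper argument to compare yours against. Judged on its own terms, your outline follows the natural adaptation of the classical Castelnuovo--de Franchis argument to the tubular setting: pointwise proportionality of the $\omega_i$ gives a rank-one foliation $\mathcal F$; closedness makes the ratios $\omega_i/\omega_1$ first integrals of $\mathcal F$, hence a map $\psi$ to $\mathbb P^{k-1}$ of generic rank one; compactness of the fibres forces each $\psi(F_t)$ to be a projective curve; and the forms descend along the resulting map. You also correctly identify the globalization of the leaf space as the crux, and your argument that the curves $C'_t=\psi(F_t)$ all coincide (a shared open subset via the rank theorem, plus irreducibility of the image of a connected compact fibre) is sound. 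One small point: on a complex surface the Frobenius condition $\eta\wedge d\eta=0$ is automatic for degree reasons, so closedness is not needed for integrability --- its real role is exactly the constancy of the ratios along leaves, which you do use.

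Two steps are genuinely incomplete. First, $\psi$ is defined only on the open set $U$ where the $\omega_i$ do not all vanish; the complement can contain a curve (a multisection of $f$), and you never extend $\phi$ to all of $S$. Fibrewise this is the usual elimination of base points of a linear subsystem of $|\omega_{F_t}|$, but you must check that these fibrewise extensions glue to a holomorphic map on $S$ (or invoke extension theorems for maps into the hyperbolic curve $C$ once $g(C)\geq 2$ is known). Second, ``performing the Stein factorization of $\psi|_{F_t}$ uniformly in $t$'' hides the real issue: the descent $\omega_i=\phi^*\eta_i$ requires the fibres of $\phi$ to be connected, since your Cartan-formula argument only shows that $\omega_i$ is locally basic, i.e.\ that its coefficient is constant on each leaf, and that coefficient could differ on distinct connected components of a fibre of $\phi$. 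You therefore need to show that the intermediate curves of the fibrewise Stein factorizations form a constant family over $\Delta$ (for instance by Stein-factorizing the proper map $(f,\phi)\colon S\to\Delta\times C$ and arguing that the resulting family of curves over the disk is trivial). This can be done, but it is a genuine step of the proof, not a remark.
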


\section{A lemma on geodesics}\label{sec:geo}

\begin{say}
  Let $B \subset \sig_g$ be a complex submanifold with $\dim_\bC B =1$
  and consider the restriction to $B$ of the tautological PVHS on
  $\sigg$ introduced in \ref{tautovhs}.  Consider the exact sequence
  \begin{equation*}
    \begin{tikzcd}
      0 \arrow{r} & \cK \arrow{r} & \cH 
      \arrow{r}{\pi} & \cH/\cK \arrow{r} & 0.
    \end{tikzcd}
  \end{equation*}
  Using on $\cH$ the flat connection $\nabla$ we get a second
  fundamental form as described in \ref{secff}:
  \begin{gather*}
    \tau_x : T_xB \otimes \cK_x \lra \cH_x/\cK_x, \quad
    \tau_x(v\otimes e) := \pi ( (\nabla_v \tilde{e}) (x)),
  \end{gather*}
  where $\tilde{e}$ is a local section of $\cK$ such that
  $\tilde{e}(x) = e$.  Moreover $\tau$ is a holomorphic section of
  $\Omega^1_B\otimes \cK ^* \otimes \cH / \cK$.
\end{say}

\begin{lemma}
  If $\tau \equiv 0$ on $B$, then $\cK = \cU$.
\end{lemma}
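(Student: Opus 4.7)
The plan is to show that the vanishing of $\tau$ forces the flat connection $\nabla$ to preserve the subbundle $\cK$, and then to identify the resulting connection with the Hodge connection on $\cK$, which will produce local flat frames of $\cK$ lying in $\bU$.

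First I would unwind the definition of $\tau$. Since $\tau_x(v \otimes e) = \pi((\nabla_v \tilde e)(x))$ where $\pi : \cH \to \cH/\cK$ is the projection, the hypothesis $\tau \equiv 0$ says exactly that for every local section $\tilde e$ of $\cK$ and every tangent vector $v$ on $B$, the derivative $\nabla_v \tilde e$ lies in $\cK$. Hence $\nabla$ restricts to a $\bC$-linear connection on $\cK$, which I will call $\nabla|_\cK$. Being the restriction of a flat connection to a $\nabla$-invariant subbundle, $\nabla|_\cK$ is itself flat.

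Next I would compare $\nabla|_\cK$ with the Hodge (Chern) connection $\nh$. Because $\cK \subset \cH^{1,0}$ and $\nabla|_\cK$ sends sections of $\cK$ to sections of $\cK \subset \cH^{1,0}$, the orthogonal projection $p:\cH \to \cH^{1,0}$ acts as the identity on $\nabla s$ for $s \in \cK$. Therefore
\begin{equation*}
  \nh s = p(\nabla s) = \nabla s \qquad \text{for every section } s \text{ of } \cK,
\end{equation*}
so $\nabla|_\cK = \nh|_\cK$. In particular, since $\nh$ is unitary for $h$ on $\cH^{1,0}$, the formula $d\,h(s_1,s_2)=h(\nh s_1,s_2)+h(s_1,\nh s_2)$ restricts to sections of $\cK$ and shows that $\nabla|_\cK$ is unitary for $h|_\cK$.

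To conclude, I would use that a flat holomorphic connection on a holomorphic vector bundle admits, locally around each point, a frame of flat sections. Pick such a $\nabla|_\cK$-flat frame $s_1,\dots,s_r$ of $\cK$ near a point of $B$. By the previous step each $s_i$ is $\nh$-flat as a section of $\cH^{1,0}$, hence belongs to the local system $\bU$ by its very definition. Consequently $\cK$ is locally generated by sections of $\bU$, giving $\cK \subset \bU \otimes \cO_B = \cU$, and combined with the opposite inclusion $\cU \subset \cK$ of Proposition \ref{prop-incUK} we obtain $\cK = \cU$. The main subtle point is not an obstacle but the identification $\nabla|_\cK = \nh|_\cK$: it is exactly there that the hypothesis $\cK \subset \cH^{1,0}$ (and not just $\cK \subset \cH$) is used to upgrade the flat connection inherited from $\nabla$ to the Chern connection of the Hodge metric.
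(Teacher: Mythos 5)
Your proof is correct and takes essentially the same route as the paper: both deduce from $\tau\equiv 0$ that $\nabla$ preserves $\cK$ and then produce $\nabla$-flat sections lying in $\cK\subset\cH^{1,0}$ (the paper by parallel-transporting a single vector, you by taking a flat frame of the restricted connection), which therefore belong to $\bU$, giving $\cK\subset\cU$ and hence equality by Proposition \ref{prop-incUK}. Your extra identification $\nabla|_{\cK}=\nh|_{\cK}$ is correct but not strictly necessary, since Definition \ref{defUK} already characterizes $\bU$ equally well by $\gm$-flat sections of $\cH^{1,0}$.
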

\begin{proof}
  If $\tau \equiv 0$, the connection $\nabla$ preserves the subbundle
  $\cK$.  If $u\in \cK_x$, let $s$ be a local section of $\cH$ such
  that $s(x)=u$ and $\gm s=0$. Then $s$ lies in $\cK$ since $\cK$ is
  preserved by the parallel displacement of $\nabla$. Thus $s$ in fact
  lies in $\bU$ and $u\in \cU_x$. Hence $\cK \subset \cU$.  The
  opposite inclusion is always true.
\end{proof}

If $\gamma : \bR \fr \sigg$ is a non-constant geodesic and
$-\infty < a < b < +\infty$, we call $\Gamma:=\gamma([a,b])$ a
\emph{geodesic segment}.

\begin{lemma}
  \label{geodetica}
  Let $B \subset \sig_g$ be a complex submanifold with
  $\dim_\bC B =1$.  If $B$ contains a geodesic segment, then
  $\cK=\cU$.
\end{lemma}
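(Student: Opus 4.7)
The plan is to deduce the conclusion from the previous lemma by showing that the second fundamental form $\tau$ of $\cK \subset \cH$ vanishes identically on $B$. This decomposes naturally into two ingredients: (i) $\tau$ vanishes along the geodesic segment $\Gamma = \gamma([a,b]) \subset B$, and (ii) holomorphicity of $\tau$ upgrades this to vanishing on all of $B$. Ingredient (ii) is routine: $\tau$ is a holomorphic section of $\Omega^1_B \otimes \cK^* \otimes \cH/\cK$ over the Riemann surface $B$, and $\Gamma$, being a real-analytic arc, is a non-discrete subset; so in any local trivialization the components of $\tau$ are holomorphic functions vanishing on a non-discrete set, hence identically zero on the connected component of $B$ containing $\Gamma$.

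The real work is in ingredient (i). I would parameterize $\gamma:[a,b]\to B\subset\sigg$, write $X(t):=\gamma'(t)\in T_{\gamma(t)}\sigg$, and use the identification $T\sigg\simeq W$ from \eqref{wj} to view $X(t)$ as an operator $X(t)^*\in\Hom(\cH^{1,0}_{\gamma(t)},\cH^{0,1}_{\gamma(t)})$ whose kernel (intersected with $\cH^{1,0}$) is precisely $\cK_{\gamma(t)}$. For a smooth section $t\mapsto e(t)$ of $\cK$ along $\gamma$, the defining identity $X(t)^*e(t)=0$ can be differentiated via Leibniz for the connection $\nhom$ on $\Hom(\cH^{1,0},\cH^{0,1})$ and the induced connection $\nabla^*$ on $\cH^{0,1}$:
\[
0 \;=\; \nabla^*_t\bigl(X(t)^*e(t)\bigr) \;=\; \bigl(\nhom_t X(t)^*\bigr)\,e(t) \;+\; X(t)^*\,\bigl(\nh_t e(t)\bigr).
\]
The first summand vanishes because, as recalled in the paper, the Levi-Civita connection of $\sigg$ corresponds under $T\sigg\simeq W$ to the restriction of $\nhom$, and the geodesic equation asserts that $\gamma'$ is Levi-Civita parallel. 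Hence $X(t)^*\bigl(\nh_t e(t)\bigr)=0$, i.e. $\nh_t e(t)\in\cK_{\gamma(t)}$.

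To conclude, note that because $\tilde e$ is a section of $\cK\subset\cH^{1,0}$ and $\sigma(\tilde e)=0$ by definition of $\cK$, the flat derivative $\nabla\tilde e$ has no component in $\cH/\cH^{1,0}$; thus $\nabla\tilde e = \nh\tilde e$. Projecting to $\cH/\cK$ and using the previous paragraph,
\[
\tau\bigl(X(t)\otimes e(t)\bigr) \;=\; \pi\bigl(\nabla_{X(t)}\tilde e\bigr) \;=\; \pi\bigl(\nh_t e(t)\bigr) \;=\; 0.
\]
So $\tau$ vanishes on $\Gamma$, hence on $B$, and the previous lemma yields $\cK=\cU$. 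The main obstacle I anticipate is purely notational: one must keep straight the interplay between the flat connection $\nabla$ on $\cH$, the Chern connection $\nh$ on $\cH^{1,0}$, and the induced $\nhom$ on $\Hom(\cH^{1,0},\cH^{0,1})$, and be careful that the identification $T\sigg\simeq W$ intertwines Levi-Civita with $\nhom$. Once the Leibniz computation is carried out on the nose, the rest is immediate.
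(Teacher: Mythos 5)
Your proof is correct and follows essentially the same route as the paper's: reduce to vanishing of $\tau$ along $\Gamma$ via the identity principle, then run the Leibniz computation for $\nhom$ in which the geodesic equation $\nhom_{\dot{\gamma}}\dot{\gamma}=\nabla^{\operatorname{Levi-Civita}}_{\dot{\gamma}}\dot{\gamma}=0$ (under $T\sigg\simeq W$) kills the first term. The only cosmetic difference is that the paper applies this identity to the $\nh$-parallel translate of $e$ (deducing first that it stays in $\cK$, then that $\nabla\tilde{e}=0$), whereas you differentiate the defining relation $X(t)^*e(t)=0$ for an arbitrary section of $\cK$ along $\gamma$ and conclude $\nh_t e(t)\in\cK$ directly, a slight streamlining that dispenses with the uniqueness-of-parallel-sections step.
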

\begin{proof}

  By the previous lemma it is enough to show that $\tau \equiv 0$ on
  $B$.  Since $\tau$ is holomorphic, if we prove that $\tau$ vanishes
  on $\Gamma$, then $\tau \equiv 0 $ on all $B$ by the identity
  principle.

  To show that $\tau=0$ on $\Gamma$ fix $x=\gamma(t_0) \in \Gamma$,
  $v \in T_xB$ and $e \in \cK_x$.  Since $T_xB$ has complex dimension
  1, $v = \lambda \dot{\gamma}(t_0)$ for some $\lambda \in \bC$, so it
  is enough to consider $v= \dot{\gamma}(t_0)$.  Let
  $\tilde{e}=\tilde{e}(t)$ be the section of $\cK$ over $\Gamma$
  obtained by parallel translation of the vector $e$ with respect to
  the connection $\nh$. A priori $\tilde{e}$ is only a section of
  $\cH^{1,0}$.  We claim that in fact
  $\tilde{e}(t) \in \cK_{\gamma(t)}$.

  Set $\cH^{0,1}:=\cH / \cH^{1,0} $.  Since
  $\cH^{0,1} \cong (\cH^{1,0})^*$, the connection $\nh$ induces a
  connection on $\cH^{0,1}$ that we denote by $\nabla^*$.  We get an
  induced connection on $\Hom (\cH^{1,0}, \cH^{0,1}) $ that we denote
  by $\nhom$.  The tangent bundle $T\sigg$ is a subbundle of
  $\Hom (\cH^{1,0}, \cH^{0,1}) $ and the Levi-Civita connection for
  the symmetric metric agrees with the restriction of the connection
  on $\Hom (\cH^{1,0}, \cH^{0,1}) $.  So if $\xi=\xi(t) $ is a section
  of $T\sigg$ and $s=s(t)$ is a section of $\cH^{1,0}$ we have
  \begin{gather*}
    \nabla^* _{\dot{\gamma}} (\xi (s)) = (\nhom _{\dot{\gamma}} \xi)
    (s) + \xi (\nh _{\dot{\gamma}} s).
  \end{gather*}
  Take $\xi = \dot{\gamma}$ and $s=\tilde{e}$. We have
  \begin{gather*}
    \nhom _{\dot{\gamma}} \dot{\gamma} =
    \nabla^{\operatorname{Levi-Civita}} _{\dot{\gamma}} \dot{\gamma} =
    0, \qquad \nh _{\dot{\gamma}} \tilde {e} = 0.
  \end{gather*}
  So $\nabla^*_{\dot{\gamma}} ( \dot{\gamma}(\tilde{e})) \equiv 0$.
  Recall now that $e = \tilde{e}(t_0) \in \cK_x$.  Moreover
  $\cK_{\gamma(t)} = \{ u \in \cH^{1,0}_x : \dot{\gamma}(t)(u) = 0\}$.
  So $\dot{\gamma} (\tilde{e}) (t_0) =0$.  Since we have checked that
  $\dot{\gamma}(\tilde{e})$ is a parallel section over $\Gamma$, we
  conclude that $\dot{\gamma} (\tilde{e}) \equiv 0$ on $\Gamma$. This
  means that $\tilde{e}(t) \in \cK_{\gamma(t)}$ for any $t$, as
  claimed.

  To conclude the proof, notice that on $\cK$ the connections $\nh$ e
  $\nabla$ coincide.  So
  \begin{gather*}
    \nabla_{\dot{\gamma}(t)} \tilde{e} = \nh_{\dot{\gamma}(t)}
    \tilde{e} \equiv 0.
  \end{gather*}
  Since $\tilde{e}$ is a section of $\cK$ we can use it to compute the
  second fundamental form and we get
  \begin{gather*}
    \tau_x(v, e) = \pi (\nabla _{\dot{\gamma}(t)} \tilde{e} ) (t_0) =0
    .
  \end{gather*}
\end{proof}

\begin{lemma}
  \label{analitica}
  Let $f : I=[a,b] \fr M$ be a real analytic map in a complex manifold
  $M$.  Then there is an open subset $A\subset \bC$ containing $I$ and
  a holomorphic extension $h : A \fr M$ of $f$.
\end{lemma}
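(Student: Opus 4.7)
The plan is to reduce the question to a local statement via real analytic charts, extend locally using the power series description of real analyticity, and then glue the local extensions on the compact interval $I$ using the identity principle.

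First I would work locally. Fix $t_0 \in I$ and choose a holomorphic chart $(U_{t_0},\phi_{t_0})$ of $M$ around $f(t_0)$, with $\phi_{t_0}\colon U_{t_0}\to V_{t_0}\subset\bC^n$. Since $f$ is real analytic, the composition $\phi_{t_0}\circ f$ is a real analytic $\bC^n$-valued function of the real variable $t$ near $t_0$, hence given by a convergent power series
\begin{gather*}
  (\phi_{t_0}\circ f)(t)=\sum_{k\ge 0} a_k^{(t_0)}(t-t_0)^k
\end{gather*}
in some interval $(t_0-\epsilon,t_0+\epsilon)$. The same series, regarded as a complex power series in $z-t_0$, converges on the open disk $D_{t_0}=\{z\in\bC:|z-t_0|<\epsilon\}$ and defines a holomorphic map $\tilde h_{t_0}\colon D_{t_0}\to\bC^n$. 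Shrinking $\epsilon$ if needed, we may assume $\tilde h_{t_0}(D_{t_0})\subset V_{t_0}$, so that $h_{t_0}:=\phi_{t_0}^{-1}\circ \tilde h_{t_0}\colon D_{t_0}\to M$ is a holomorphic extension of $f|_{I\cap D_{t_0}}$.

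Next I would globalize by compactness. The family $\{D_{t_0}\cap\bR:t_0\in I\}$ is an open cover of the compact set $I$, so finitely many points $t_1<\dots<t_N$ suffice, with corresponding disks $D_j:=D_{t_j}$ and extensions $h_j\colon D_j\to M$. Set $A:=\bigcup_j D_j$; it is an open subset of $\bC$ containing $I$, and we may arrange $A$ to be connected (by shrinking disks to make consecutive ones overlap only in small neighborhoods of $I$). To obtain a single $h\colon A\to M$ I would verify that $h_i$ and $h_j$ agree on each connected component of $D_i\cap D_j$ that meets $I$: on such a component, both maps are holomorphic and coincide with $f$ on a nonempty open subinterval of $I$, so after further composing with charts and applying the identity principle for holomorphic functions of one complex variable, the two agree on that component. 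After discarding components of intersections that do not meet $I$ (if necessary), the $h_j$ glue to a well-defined holomorphic $h\colon A\to M$ extending $f$.

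The only technical point, and the mildly delicate one, is the gluing on overlaps: two adjacent disks $D_i,D_j$ may be centered so that $f(t_i)$ and $f(t_j)$ lie in different charts of $M$, and one must apply the identity principle after passing through the holomorphic transition function $\phi_j\circ\phi_i^{-1}$ on the overlap of the chart domains. Beyond this bookkeeping the argument is routine, and no new ideas are required.
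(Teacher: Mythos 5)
Your proof is correct and follows essentially the same route as the paper: extend locally on disks via the power series expression of a real analytic function in a holomorphic chart, then glue the local extensions along the compact interval using the identity principle on connected overlaps meeting $I$. The gluing subtlety you flag (overlaps should be connected and meet $I$, possibly after shrinking or discarding components) is handled in the paper by the same device, namely an inductive gluing where each new overlap is assumed connected and contains a subinterval of $I$.
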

\begin{proof}
  We can easily find a finite family of disks $\{D_i \}_{i=1}^m$
  centred at points $t_i \in I$ such that (a) $f(D_i \cap I)$ is
  contained in the domain of a chart $U_i$
  $(U_i,\phi_i=( z^1_i , \ldots, z_i^n))$ of $M$, (b) on $D_i$ there
  are $n$ holomorphic functions $h_i^j$, $j=1, \ldots, n$, (c)
  $z^j_i\circ f =h_i^j$ on $D_i\cap I$.  The function
  $f_i:=\phi_i\meno \circ (h_i^1, \ldots, h_i^n) $ is a holomorphic
  extension of $f|_{ D_i\cap I}$ to $D_i$.  We claim that these
  functions glue together.  Indeed we start by setting $h_1:=f_1$ on
  $D_1$ and we proceed inductively. Assume that a holomorphic
  extension $h_{k-1}$ is given on $D_1 \cup \cdots \cup D_{k-1}$. We
  claim that $h_{k-1} = f_k$ on
  $D_k \cap (D_1 \cup \cdots \cup D_{k-1})$.  Indeed this set is
  connected and contains a subinterval of $I$.  $h_{k-1} = f = f_k$ on
  this subinterval.  Therefore by the identity principle $h_{k-1}=f_k$
  on $D_k \cap (D_1 \cup \cdots \cup D_{k-1})$. Thus using $h_{k-1}$
  and $f_k$ we get a well-defined holomorphic extension $h_k$ on
  $D_1 \cup \cdots \cup D_k$. At the end it is enough to set $h:=h_n$.
\end{proof}

\section{Proof of the Theorem}
\label{sec:draft}

\begin{proposition}
  \label{secanti}
  Assume that $g\geq 3$ and let $Y \subset \M_g $ be a subvariety of
  of codimension $c$.  For a smooth point $y\in Y$, there is
  $\xi\in T_yY$ such that
  \begin{gather}
    \label{stima-bis}
    \dim (\ker \cup\xi) \geq g - k_0 -1,
  \end{gather}
  where
  \begin{gather}
    \label{k0}
    k_0:= \Bigl \lceil \frac{c-1}{2} \Bigr \rceil.
  \end{gather}
\end{proposition}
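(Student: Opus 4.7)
The plan is to identify $T_y\M_g\cong H^1(C,T_C)$ by Kodaira--Spencer (where $C$ is the curve corresponding to $y$) and to exhibit the required $\xi$ as an element of $T_yY$ lying on a suitable secant variety of Schiffer variations. Recall that for each $p\in C$ the Schiffer variation $\xi_p\in H^1(C,T_C)$ has the property that $\cup\xi_p\colon H^0(\omega_C)\to H^1(\OO_C)$ has rank one with kernel $H^0(\omega_C(-p))$ of dimension $g-1$. Via the Serre duality identification $H^1(C,T_C)\cong H^0(\omega_C^{\otimes 2})^{*}$, the map $p\mapsto [\xi_p]$ is the bicanonical map; its image $X\subset \bP(H^1(C,T_C))\cong \bP^{3g-4}$ is an irreducible non-degenerate curve for every $g\geq 2$, because $h^0(\omega_C^{\otimes 2})=3g-3$ is the full dimension of the ambient space (even in the hyperelliptic case, where the bicanonical map fails to be very ample but the image still spans everything).

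Next I would consider the $(k_0+1)$-secant variety $\Sigma:=\mathrm{Sec}_{k_0+1}(X)\subset \bP^{3g-4}$. By the classical dimension theorem for secants of an irreducible non-degenerate curve, $\dim \Sigma = \min(2k_0+1,\,3g-4)$. Set $L:=\bP(T_yY)\subset\bP^{3g-4}$, a linear subspace of codimension $c$. The key numerical observation is
\[
\dim L + \dim \Sigma \;\geq\; (3g-4-c)+(2k_0+1)\;=\;3g-4+(2k_0+1-c)\;\geq\; 3g-4,
\]
valid precisely because $k_0\geq (c-1)/2$ by the definition of $k_0$. Intersection theory in $\bP^{3g-4}$ then forces $L\cap \Sigma\neq\emptyset$ (the degenerate case $T_yY=0$ being handled trivially by $\xi=0$).

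Picking $[\xi]\in L\cap\Sigma$ and lifting it to $\xi\in T_yY$, I conclude as follows. For any honest secant point $\xi'=\sum_{i=1}^{k_0+1}a_i\xi_{p_i}$ of $\Sigma$ (with distinct $p_i$), the operator $\cup\xi'$ is the sum of $k_0+1$ rank-one maps and hence has rank at most $k_0+1$. Such points form a Zariski-dense open subset of $\Sigma$, and the locus $\{\xi\colon \mathrm{rank}(\cup\xi)\leq k_0+1\}$ is closed (cut out by the vanishing of $(k_0+2)\times(k_0+2)$-minors), so $\mathrm{rank}(\cup\xi)\leq k_0+1$ for every $[\xi]\in \Sigma$; in particular for our chosen $\xi$. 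Therefore $\dim\ker(\cup\xi)\geq g-(k_0+1)=g-k_0-1$, proving the proposition.

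The main technical point I expect to be delicate is the secant dimension computation: non-degeneracy of $X$ in the hyperelliptic case must be verified by hand (it follows from $h^0(\omega_C^{\otimes 2})=3g-3$ as above), and one needs the classical statement that $\dim\mathrm{Sec}_{k_0+1}(X)=\min(2k_0+1,\,3g-4)$ for any non-degenerate irreducible curve in projective space.
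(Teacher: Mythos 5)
Your proposal is correct and follows essentially the same route as the paper: identify $T_yY$ inside $H^1(C,T_C)\cong H^0(\omega_C^{\otimes 2})^\vee$, intersect $\bP(T_yY)$ with the $(k_0+1)$-secant variety of the bicanonical curve (whose dimension $2k_0+1$ makes the intersection nonempty for codimension-$c$ linear spaces), and bound the rank of $\cup\xi$ by $k_0+1$. The only minor difference is the last step: you extend the rank bound from honest secants of Schiffer variations to all of the secant variety by Zariski-closedness of the rank condition, whereas the paper attaches to $[\xi]$ an effective divisor $D$ of degree $k_0+1$ with $\xi\in\ker\bigl(H^1(T_C)\to H^1(T_C(D))\bigr)$ and invokes a lemma of Barja--Gonz\'alez-Alonso--Naranjo; both are valid.
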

\begin{proof}
  Restricting $Y$ we can assume that it embeds in the Kuranishi
  family. So $T_y Y \hookrightarrow H^1(C,T_C)$ where $[C]=y$.
  Consider the bicanonical image
  $X:=\operatorname {Bic} (C) :=\phi_{|2K_C|} (C) \subset
  \bP(H^1(C,T_C)) =\bP H^0(C, \omega_C^{\otimes 2})^{\vee}$.  Denote
  by $S^kX$ the variety of $k$-secants of $X$.
  Since $ \dim S^kX = 2k+1 $, we have
  $\dim S^{k_0} X \geq c =\codim (\bP(T_yY) \subset \bP(H^1(C,T_C)))$.
  Hence $S^{k_0} \cap \bP(T_yY) $ contains at least some point
  $[\xi]$.  By construction there is an effective divisor $D$ of
  degree $k_0+1$ such that
  \begin{gather}
    \label{suppo}
    \xi \in \ker ( \rho_D : H^1(C,T_C) \lra H^1(C,T_C(D))
  \end{gather}
  where $\rho_D$ is the map induced by the inclusion
  $T_C \hookrightarrow T_C(D)$.  Indeed, given an effective divisor
  $D$, denote by $\langle D\rangle$ the intersection of all
  hyperplanes $H \subset \bP H^0(C,\omega^{\otimes 2})^\vee$ such that
  $D\leq \phi_{|2K_C|}^*H$.  Set
  $X_{k_0+1}:= \{(D,p) \in C^{(k_0+1)}\times \bP H^0(C,\omega^{\otimes
    2})^\vee: p\in \langle D\rangle \}$ and denote by $p_2$ the second
  projection. Then $S^{k_0}X=p_2(X_{k_0+1})$. So
  $[\xi] \in \langle D\rangle $ for some $D\in C^{(k_0+1)}$, which
  yields \eqref{suppo}.  But
  \begin{gather*}
    \dim \ker ( \cup \xi: H^0(C,\om_C) \fr H^1(C, \cO_C) ) \geq
    g -\deg D = g - k_0 -1.
  \end{gather*}
  (See e.g. \cite[Lemma 2.3]{BGN_Xiao_2015}.)
\end{proof}

\begin{proof}[Proof of Theorem \ref{Thm-Main2}]
  We argue by contradiction, assuming the existence of a totally
  geodesic subvariety $Y\subset \A_g$ that is generically contained in
  $\M_g$ and with $\dim Y > (7g-2)/3$. If $c$ denotes the codimension
  of $Y\cap \M_g$ in $\M_g$, this is equivalent to
  \begin{gather}
    \label{assurdo}
    c < \frac{2g - 7}{3} .
  \end{gather}
  Observe that for $k_0$ defined in \eqref{k0} this implies
    \begin{gather}
      \label{eq:100}
      2k_0\leq g-4.
    \end{gather}
    Observe also that by dimension $Y$ is not contained in the
    hyperelliptic locus. Fix a smooth point $y\in Y$ that
    represents a non-hyperelliptic curve. By Proposition
  \ref{secanti} there is $\xi\in T_yY$ such that $\xi\neq 0$ and
  \begin{gather}
    \label{rangoxi}
    \dim (\ker \cup\xi) \geq g - k_0 -1,
  \end{gather}
  where $k_0$ is defined as in
  \eqref{k0}.  Let $\Delta$ be a polydisk and let $F: \cX \fr \Delta$
  be a Kuranishi family with $[X_0]=y$.  The moduli map
  $\pi: \Delta \fr \M_g$, $\pi(t): = [X_t]$ is finite, satisfies
  $\pi(0) = y$ and its image is a neighbourhood of $y$.  The period
  mapping $j$ can be lifted to a map $\tj: \Delta \fr \sigg$.
  \begin{equation*}
    \begin{tikzcd}
      \Delta \arrow{r}{\tj} \arrow{d}{\pi} & \sigg \arrow{d} \\
      \M_g \arrow{r}{j} & \A_g.
    \end{tikzcd}
  \end{equation*}
  Thus $Y':=\tj ( \pi^{-1}( Y)) $ is a germ of totally geodesic
  submanifold of $\sigg$, that contains the point $y':=\tj(0)$ and is
  contained in $\tj(\Delta)$.  Let $\gamma: \bR \fr \sigg$ be the
  geodesic in $\sigg$ such that $\gamma(0)=y'$ and
  $\dot{\gamma}(0) = d\tj(\xi)$.  Since $\xi\neq 0$ the curve $\gamma$
  is non-constant.  Moreover it is real analytic since the Siegel
  metric is real analytic.  Fix $\varepsilon>0$ such that
  $\gamma ([-\varepsilon,\varepsilon]) \subset Y'$.  By Lemma
  \ref{analitica} there is an open subset $A \subset \bC$ containing
  $[-\varepsilon,\varepsilon]$ and a holomorphic extension
  $h : A \fr Y'$ of $\gamma$.  Restricting $A$ we can assume that $h$
  is an embedding and that it avoids the hyperelliptic locus. Set
  $B:=h(A)$.  Since $B\subset Y' \subset \tj(\Delta)$ we can restrict
  the Kuranishi family to $\tj^{-1}(B)\simeq B$ and we get a universal
  family of curves $f: \cC \fr B$ such that $b_0:=h(0) = \tj(y)$.  The
  VHS of this family is simply the restriction to $B$ of the
  tautological VHS on $\sigg$ described in \ref{tautovhs}.  We
  consider the bundles $\cU$ and $\cK$ for this VHS on $B$ (see
  Definition \ref{defUK}).  By Lemma \ref {geodetica} we have
  $\cU=\cK$.  But using \eqref{griffini} we see that
  $\cK_{b_0}= \ker ( \cup \dot{\gamma}(0)) = \ker (\cup \xi) $. At
  this point we use the bound \eqref{rangoxi}.  Summing up $\cK=\cU$
  has rank at least $ g - k_0 -1$.

  Now we consider the Massey products of $\cK=\cU$ on $B$ (see
  definition \ref{def:MP}).

  Assume first that these are identically 0.  Take a basis
  $u_1, \dots , u_m$ of $\cK_b$. Up to shrinking $B$ we can extend
  these vectors to flat sections $u_1, \dots, u_m$ of $\cU$ on $B$. By
  Proposition \ref{prop-liftMT}, there exist unique liftings to
  sections $\omega_1,\ldots,\omega_{m}$ of $f_\ast\Omega^1_{\cC, d}$
  (i.e. of closed holomorphic $1$-forms on $\cC$) such that
  $\omega_i\wedge \omega_j=0$, for any $i,j$.  By Theorem
  \ref{thm:tubularCdF} (i.e. \cite[Theorem 1.5]{GST17}), we get a a
  morphism $\phi:\cC\fr C'$ onto a genus $g'\geq 2$ smooth compact
  curve $C'$, whose restriction to every fibre of $f$ gives a
  non-constant degree $n$ morphism $ \phi:C_b\fr C'$ such that
  $\omega_1,\ldots,\omega_{m}\in \phi^\ast H^0(\omega_{C'})$. It
  follows immediately that $\rk \cU\leq g'$. But also
  $g'\leq \rk \cU$, since any section given by pull back from $C'$ is
  flat and has wedge zero with the others. So we conclude that
  $\rk \cU= g'$.  Recalling the bound on $\rk \cK$ established above,
  we get
  \begin{gather*}
    g(C')= \rk \cU = \rk \cK \geq g - k_0 -1.
  \end{gather*}
  Since $f$ is non isotrivial by construction, $n\geq 2$, so by the
  Riemann-Hurwitz formula
  \begin{gather*}
    2g-2 \geq 4g -4k_0 -4 -4 , \quad 2 k_0 \geq g -3 .
  \end{gather*}
  But we were assuming \eqref{assurdo} and hence \eqref{eq:100}. So we
  would get $ g-3 \leq 2k_0 \leq g-4$, which is clearly absurd. This
  shows that the Massey products cannot vanish identically.

  So there is $b\in B$ and $u_1, u_2 \in \cK_b$, with
  $m(u_1, u_2) \neq 0$. In particular there are
  $\tilde{u}_i \in H^0(E_b)$ with
  $\tilde{u}_1\wedge \tilde{u}_2 \neq 0$.  Before using this
  information, we need to recall a construction already used e.g. in
  \cite [p. 428-429]{MNP16}.  Consider the sequence
  \eqref{SeS-DeRhamOnSFibre} with extension class $\xi:=\xi_b(v)$.
  From the associated cohomology sequence
  \begin{gather*}
    0 \fr H^0(\cO_{C_b}) \cong \bC \fr H^0(E_b) \fr H^0 (\om_{C_b})
    \stackrel{\cup \xi} {\lra} H^1 (\cO_{C_b})
  \end{gather*}
  we get $h^0 (E_b) = \dim \ker (\cup \xi) + 1$.  Recall that
  $\det E_b \cong \om_{C_b}$ and the definition of the adjunction map
  \eqref{Mor-Mp/Aj}.
  We claim that $\ker \Phi_b$ contains a non-zero decomposable
  element.  Indeed we have
  $ h^0(E_b)= \dim \ker (\cup \xi) + 1 \geq g - k_0 $, hence
  $ 2h^0(E_b) -4 \geq 2g -2k_0 -4$.  But recall that we are assuming
  \eqref{assurdo}. Hence from \eqref{eq:100}
  we get
  \begin{gather}
    \label{indecompo}
    2h^0(E_b) -4 \geq g=h^0(\om_{C_b}).
  \end{gather}
  Since
  $g \geq \dim \Lambda^2 H^0(E_b) - \dim \ker \phi = \dim \bP
  (\Lambda^2 H^0(E_b)) - \dim \bP(\ker \phi)$ and
  $\dim \bG (2, \Lambda^2H^0(E_b)) = 2 h^0(E_b) -4$, it follows from
  \eqref{indecompo} that
  $ \bG (2, \Lambda^2H^0(E_b)) \cap\bP(\ker \phi) \neq \oslash$, so
  there is a decomposable element
  $s_1\wedge s_2 \in \ker \phi - \{0\}$, as claimed.  In other words
  there are linearly independent sections $s_1, s_2 \in H^0(E)$ such
  that $s_1(x)$ and $s_2(x)$ are always proportional.  Let
  $\mathfrak{F}$ be the subsheaf of $\cO_{C_b}(E_b)$ generated by
  $s_1$ and $s_2$. The saturation $\mathfrak{L}$ of $\mathfrak{F}$ is
  the sheaf of sections of a line bundle $L$.  By construction
  $s_1,s_2 \in H^0(L)$ are linearly independent, so $h^0(L) \geq 2$.
  The quotient sheaf $\cO_{C_b}(E_b) / \mathfrak{L}$ is also the sheaf
  of sections of a line bundle $M$. Thus
  \begin{equation*}
    \begin{tikzcd}
      & & 0 \arrow{d} & & \\
      & & L \arrow{d}{\alpha} & & \\
      0 \arrow {r} & \cO_{C_b} \arrow{r}{i} \arrow{rd}{s_3} & E_b
      \arrow{d}{\beta} \arrow{r} &
      \om_{C_b} \arrow{r} & 0 \\
      & & M \arrow{d} & &\\
      && 0 &&
    \end{tikzcd}
  \end{equation*}
  $s_3 :=\beta \circ i$ is a section of $M$. We claim that
  $s_3 \not \equiv 0$. Indeed if $s_3 \equiv 0$, we would have
  $i(1) = \alpha (\sigma)$ for a nonvanishing section
  $\sigma \in H^0(L)$. But then $L$ would be trivial and $s_1$ and
  $s_2$ would be linearly dependent.  Thus $s_3 \not\equiv 0$. Let $D$
  be the divisor of zeros of $s_3$. Then $M=\cO_{C_b}(D)$.  Since
  $\det E_b = L \otimes M \cong \om_{C_b}$, $L = \om_{C_b}(-D)$.

  Now we are finally able to use $\tilde{u_1}$ and $\tilde{u_2}$.
  Since $\tilde{u}_1\wedge \tilde{u}_2 \neq 0$, the sections
  $\tilde{u}_i$ do not lie both in $L$. Hence at least one of them has
  a non-trivial image $s_4$ in $M$. The section $s_3$ and $s_4$ are
  independent, so $h^0(M) = h^0 (\cO(D)) \geq 2$.  From the diagram
  one gets
  \begin{gather*}
    \dim \ker (\cup \xi )\leq g - (\deg D -2h^0(D) +2 ) = g-\Cliff(D).
  \end{gather*}
  (see e.g. \cite[Lemma 2.3]{BGN_Xiao_2015}).  Since $h^0(D) \geq 2$
  and $h^0(\om _C(-D)) \geq 2$, the divisor $D $ contributes to the
  Clifford index.  Therefore $\Cliff(C_b) \leq \Cliff (D)$. It is
  known that $ \operatorname{gon} (C_b) \leq \Cliff (C_b) +3$, see
  \cite[Thm .2.3]{MC91}.  By the Lemma and \ref{stima-bis}
  $ \Cliff(D) \leq g - \dim \ker(\cup \xi) \leq k_0 + 1 $.  Thus
  \begin{gather*}
    \gon (C_b) \leq \Cliff (C_b) +3 \leq \Cliff(D) + 3 \leq k_0 + 4.
  \end{gather*}

  Now we can apply \cite[Theorem 4.2]{CFG15}: since $Y \subset \M_g$
  is totally geodesic and $[C_b] \in Y$ is not hyperelliptic, we have
  \begin{gather*}
    \dim Y \leq 2g +\gon (C_b) - 4.  
  \end{gather*}
  For $g\geq 4 $ we have $ g-2 \geq (2/3)g - 1 $.  Hence
  \begin{gather*}
    3g -3 -c =\dim Y \leq 2g + k_0 + 4 - 4= 2g +\Bigl \lceil
    \frac{c-1}{2} \Bigr \rceil \leq 2g + \frac{c-1}{2} +1.
  \end{gather*}
  But this gives $ (2g - 7) /3 \leq c$, which yields the desired
  contradiction with \eqref{assurdo}.
\end{proof}

\end{document}